\documentclass[10pt, reqno]{amsart}
\usepackage{graphicx, amssymb, amsmath, amsthm, color, cite, subfig, hyperref}
\numberwithin{equation}{section}

\let\Re=\undefined\DeclareMathOperator*{\Re}{Re}

\newcommand{\R}{\mathbb{R}}
\newcommand{\C}{\mathbb{C}}

\newcommand{\eps}{\varepsilon}

\renewcommand{\S}{\mathcal{S}}

\newcommand{\F}{\mathcal{F}}

\newtheorem{theorem}{Theorem}[section]

\newtheorem*{theorem*}{Theorem}

\newtheorem{lemma}[theorem]{Lemma}

\newtheorem{proposition}[theorem]{Proposition}

\theoremstyle{definition}
\newtheorem{definition}[theorem]{Definition}

\theoremstyle{remark}

\newcommand{\qtq}[1]{\quad\text{#1}\quad}

\begin{document}

\title[Determining nonlocal nonlinearities]{Determination of radial nonlocal nonlinearities \\ from the scattering map} 

\author[L. Campos]{Luccas Campos}
\address{Department of Mathematics, Universidade Federal de Minas Gerais}
\email{luccascampos@gmail.com}

\author[R. Killip]{Rowan Killip}
\address{CNRS CEREMADE}
\email{killip@ceremade.dauphine.fr}

\author[J. Murphy]{Jason Murphy}
\address{Department of Mathematics, University of Oregon}
\email{jamu@uoregon.edu}

\author[M. Vi\c{s}an]{Monica Vi\c{s}an}
\address{Department of Mathematics, UCLA}
\email{visan@math.ucla.edu} 

\maketitle

\begin{abstract}
We show that the small-data scattering map uniquely determines the nonlinearity for a class of nonlinear Schr\"odinger equations with radial, Hartree-type nonlinearities.  Our assumptions on the convolution kernel require only a mild decay condition at infinity and permit a locally integrable singularity at the origin.
\end{abstract}

\section{Introduction}

We consider nonlinear Schr\"odinger equations with nonlocal, Hartree-type nonlinearities in dimensions $d\geq 3$:
\begin{equation}\label{nls}
(i\partial_t + \Delta) u = (K\ast |u|^2)u,
\end{equation}
where $u:\R_t\times\R_x^d\to\C$, $\ast$ denotes convolution in the spatial variable, and $K:\R^d\backslash\{0\}\to\R$.  Our assumptions on the convolution kernel $K$ are detailed in the following definition. 

\begin{definition}[Admissible kernels]\label{D:admissible} Let $d\geq 3$.  We say a function $K:\R^d\backslash\{0\}\to\R$ is \emph{admissible with parameter $c\geq 2$} if $K$ is measurable and there exists $C>0$ such that
\begin{equation}\label{Kbds}
|K(x)| \leq C\bigl(|x|^{-2}+|x|^{-c}\bigr)\qtq{for a.e.}x\in\R^d\backslash\{0\}.
\end{equation}
\end{definition}

%\begin{remark}\label{R:nesting} If $K$ is admissible with parameter $c\geq 2$, then $K$ is also admissible with parameter $c'$ for any $c'>c$. 
%\end{remark}
%

As the scaling-critical regularity for the standard Hartree equation
\[
(i\partial_t + \Delta)u = (|x|^{-a} \ast |u|^2) u
\]
is given by $s_a=\tfrac{a}{2}-1$, the bounds in Definition~\ref{D:admissible} can be interpreted as restricting the scaling-critical regularity of our model to the range $[0,\tfrac{c}{2}-1]$ (where $c\geq 2$ is the parameter associated with $K$).  To obtain a small-data scattering theory, we must limit the singularity of $K$ at the origin so that $K$ is locally integrable, which corresponds to choosing $c<d$.

In Theorem~\ref{T:scatter} below, we will prove that for any admissible $K$ with parameter $c\in[2,d)$, we may define a scattering map $\S$ on a small ball around zero in $H^{\frac{c}{2}-1}(\R^d)$, which maps a scattered state $u_-$ at $t=-\infty$ to a scattered state $u_+$ at $t=+\infty$ through equation \eqref{nls} (see Definition~\ref{D:scatter}). Our main result states that if the kernel $K$ is radial, then the small-data scattering map uniquely determines $K$.

\begin{theorem}\label{T} Let $K_1,K_2$ be radial and admissible in the sense of Definition~\ref{D:admissible} with parameters $2\leq c_1\leq c_2<d$. Let $\S_1,\S_2$ be the corresponding scattering maps, defined on a sufficiently small ball $\mathcal{B}\subset H^{\frac{c_2}{2}-1}(\R^d)$.  

If $\S_1|_{\mathcal{B}} \equiv \S_2|_{\mathcal{B}}$, then $K_1=K_2$ almost everywhere. 
\end{theorem}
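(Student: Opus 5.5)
We follow the standard linearization strategy for recovering nonlinearities from the scattering map. Both scattering maps are defined on the small ball $\mathcal{B}\subset H^{\frac{c_2}{2}-1}$; we may assume this via Theorem~\ref{T:scatter}, since $K_1$ is also admissible with parameter $c_2$. For $\varphi\in\mathcal{S}(\R^d)$ and small $\eps>0$, set $u_-=\eps\varphi$. The first step is a Born-type expansion of the scattering map:
\[
\S_j(\eps\varphi)=\eps\varphi - i\eps^3\int_{\R} e^{-it\Delta}\Bigl[\bigl(K_j\ast|e^{it\Delta}\varphi|^2\bigr)e^{it\Delta}\varphi\Bigr]\,dt + O(\eps^5)
\]
in $L^2$. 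This follows from the Duhamel formula and the small-data estimates behind Theorem~\ref{T:scatter}. The key input is that $u(t)-\eps e^{it\Delta}\varphi=O(\eps^3)$ in the Strichartz norms, so replacing $u$ by the free evolution in the nonlinearity costs $O(\eps^5)$.

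Pairing with $\varphi$, dividing by $\eps^3$, and letting $\eps\to0$, the hypothesis $\S_1=\S_2$ yields, for every Schwartz $\varphi$,
\[
\int_{\R}\iint \bigl(K_1-K_2\bigr)(x-y)\,|e^{it\Delta}\varphi(x)|^2\,|e^{it\Delta}\varphi(y)|^2\,dx\,dy\,dt=0.
\]
We would rather polarize, using four different functions, to obtain the multilinear identity
\[
\int\!\!\iint (K_1-K_2)(x-y)\,\overline{v_1}v_2(t,x)\,\overline{v_3}v_4(t,y)=0,
\qquad v_k=e^{it\Delta}\varphi_k .
\]
Polarization is legitimate because the scattering map is defined on a ball and its expansion in $\eps\varphi$ for $\varphi$ a complex combination of the $\varphi_k$ is analytic in the coefficients; alternatively, take $\varphi=\sum_k\lambda_k\varphi_k$ and extract coefficients of monomials in $\lambda_k,\bar\lambda_k$.

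The next step is to choose data that localize the identity. We take Gaussians. Since $e^{it\Delta}$ of a Gaussian is explicit, each product $\overline{v_1}v_2$ is a (complex) Gaussian in $x$ with explicit $t$-dependence. The spatial integral then becomes $\int L(z)\,G_t(z)\,dz$, where $L=K_1-K_2$ and $G_t$ is a Gaussian whose width and center we control through the parameters of the $\varphi_k$: translations, modulations, and widths. The $t$-integral then yields a family of test functions
\[
\Phi_\alpha(z)=\int_{\R} G_t^{\alpha}(z)\,dt.
\]
The aim is to show that radial test functions $\int L(z)\,\Phi_\alpha(z)\,dz=0$ for a family $\{\Phi_\alpha\}$ rich enough to force $L=0$. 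Concretely, with $\varphi_k$ centered at the origin, $\Phi$ depends only on $|z|^2$ and has the form $\int e^{-a(t)|z|^2}w(t)\,dt$. Varying a width parameter $\sigma$ should give $\int_0^\infty \ell(r)\,r^{d-1}\,\psi(\sigma r^2)\,dr=0$ for all $\sigma>0$, where $L(z)=\ell(|z|)$. This is a Mellin convolution identity, so it suffices to check that the Mellin transform of $\psi$ vanishes nowhere on an appropriate vertical line. Integrability is guaranteed by the bounds \eqref{Kbds} with $c<d$: near zero $r^{d-1-2}$ is integrable, and at infinity the Gaussian-type decay of $\psi$ helps.

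The main obstacle is the $t$-integration. The free evolution spreads the Gaussian, so $\Psi$ decays only polynomially in $\sigma r^2$, and it is a nontrivial explicit function (likely a combination of powers $(1+s)^{-d/2}$-type terms or incomplete gamma functions). We must compute its Mellin transform and verify non-vanishing on a strip where the Mellin transform of $r^{d}\ell(r)$ converges; that strip is determined by the exponents $2$ and $c_2$. If a single Gaussian family fails, we would add a modulation $e^{i\xi\cdot x}$ to one of the $\varphi_k$ to produce an extra free parameter, then argue via Fourier transform in that parameter instead. Uniqueness of the Mellin transform then gives $\ell=0$ a.e., hence $K_1=K_2$ a.e.
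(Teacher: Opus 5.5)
Your outline (Born approximation with data $\eps\varphi$, radial Gaussian data, reduction to a one-dimensional transform in the radial variable) matches the paper's. However, the step that actually forces $K_1=K_2$ is not carried out, and in the form you propose it fails.

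Write $W=K_1-K_2$ with radial profile $\ell$. Definition~\ref{D:admissible} gives $|\ell(r)|\lesssim r^{-c}$ as $r\to 0$, but only $|\ell(r)|\lesssim r^{-2}$ as $r\to\infty$. You have these reversed when you say the integrand near zero behaves like $r^{d-1-2}$. Hence $\int_0^\infty \ell(r)r^{s-1}\,dr$ converges absolutely only for $c<\Re s<2$, and this window is empty because $c\geq 2$. A weight $r^d$ merely shifts the empty window, and e.g.\ $W=|x|^{-2}$ has no Mellin transform on any vertical line. So the ``strip determined by the exponents $2$ and $c_2$'' does not exist.

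Consequently the identity $\int_0^\infty \ell(\sigma r)\Psi(r)r^{d-1}\,dr=0$ for all $\sigma>0$ cannot be inverted by Mellin uniqueness as you claim. Here $\Psi(x)=\int_\R(1+t^2)^{-d/2}\exp\{-\tfrac{|x|^2}{4(1+t^2)}\}\,dt$ is the time-integrated test function. By contrast, the point you flag as the main obstacle is routine: the Mellin transform of $\Psi$ is $2^{s-1}\sqrt{\pi}\,\Gamma(\tfrac{s}{2})\Gamma(\tfrac{d-1-s}{2})/\Gamma(\tfrac{d-s}{2})$, which has no zeros in $0<\Re s<d-1$. Your proposal also never commits to a computation (``if a single Gaussian family fails, we would add a modulation''). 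The polarization step is unnecessary: only the diagonal quantity for $\varphi=\psi_\sigma$ is needed.

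The paper avoids the convergence problem by computing the time integral exactly. Lemma~\ref{L1} identifies $\Psi=\pi^{1/2}|\nabla|^{-1}\psi$, so Proposition~\ref{P1} turns the Born term into $(\pi\sigma^2)^{\frac{d+1}{2}}\langle |\nabla|^{-1}W,\psi_\sigma\rangle$. Moving $|\nabla|^{-1}$ onto the kernel restores a genuine Gaussian test function. Your displayed identity with a Gaussian is therefore true, but for the profile of $|\nabla|^{-1}W$ rather than for $\ell$. In the variable $\rho=r^2$ it says a Laplace transform vanishes for all $s>0$. Laplace injectivity needs convergence only on a half-line, which the Gaussian decay together with the bound $\bigl||\nabla|^{-1}W(z)\bigr|\lesssim|z|^{-1}+|z|^{1-c}$ of Lemma~\ref{L:inverse-K} supplies. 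One then recovers $W$ from $|\nabla|^{-1}W$ through the harmonic extension.

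If you want to keep working with $\ell$ directly, you need a genuinely additional argument, for example:
\begin{itemize}
\item Split $\ell=\ell\chi_{(0,1)}+\ell\chi_{(1,\infty)}$; their Mellin transforms are analytic on $\Re s>c$ and $\Re s<2$ respectively.
\item The vanishing identity and the zero-free Gamma ratio show that the first continues analytically to minus the second, yielding an entire function.
\item That function is bounded by Phragm\'en--Lindel\"of (the Gamma ratio decays only like $e^{-\pi|\operatorname{Im}s|/4}$) and tends to $0$ as $\Re s\to+\infty$, so it vanishes.
\end{itemize}
That route would even bypass recovering $W$ from $|\nabla|^{-1}W$. But neither it nor the Laplace argument appears in your proposal, so as it stands the uniqueness step is missing.
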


The problem of determining an unknown nonlinearity from scattering data for dispersive PDE has been a topic of interest for several decades, and has received renewed interest in the last several years.  To provide some context for Theorem~\ref{T} and to properly highlight the contribution of this work, we begin by reviewing some related past results, focusing on `time-dependent' scattering problems in dispersive PDE. 

Early works on the recovery of the nonlinearity from the scattering map relied on fairly strong assumptions on the nonlinearity, such as analyticity or other structural assumptions.  Typical examples include the recovery of the parameters $(\alpha,p)$ in a local nonlinearity of the form $\alpha |u|^p u$ (where $\alpha$ may either be constant or a function of the spatial variable), or the recovery of the parameters $(\alpha,\gamma)$ in a nonlocal nonlinearity of the form $\alpha(|x|^{-\gamma}\ast |u|^2)u$. See, for example, \cite{CarlesGallagher, EW, MorStr, PauStr, Sasaki2, Sasaki3, Strauss, Watanabe0, Watanabe, Weder0, Weder1, Weder6, Weder3, Weder4, Weder5, Murphy, SasakiWatanabe, Sasaki4}. 

More recent work has addressed the case of significantly more general nonlinearities (in the local setting) and has also extended the theory in other directions, e.g. by proving stability estimates for nonlinearity recovery or recovering long-range nonlinearities using the modified scattering map (see e.g. \cite{ChenMurphy1, ChenMurphy2, ChenMurphy3, KMV, KMV2, SBUW, HKV}).

The contribution of the present work is to prove that the scattering map determines the nonlinearity for a significantly more general class of nonlocal nonlinearities than has been previously addressed.  

The results that are most closely-related to the present work are those of Sasaki--Watanabe \cite{SasakiWatanabe} and Sasaki \cite{Sasaki3}.   In \cite{SasakiWatanabe}, the authors considered systems for an unknown $u:\R_t\times\R_x^d\to\C^N$ of the form
\begin{equation}\label{SW}
(i\partial_t + \Delta) u = \int_{\R^d} K(x-y) U(t,x,y) \bar u(t,y)\,dy,
\end{equation}
where $U$ is the $N\times N$ matrix with entries
\[
U_{jk}(t,x,y) = \begin{cases} u_j(t,x)u_k(t,y), & \text{if } j\neq k, \\ 0, &  \text{if } j=k.\end{cases}
\]
Assuming (i) $K$ and its first derivative are bounded and continuous, (ii) $|K(x)|\leq C|x|^{-\sigma}$ for $2\leq\sigma\leq\min\{4,d-\}$, and (iii) $\widehat K$ is continuous, Sasaki and Watanabe proved that the small-data scattering map for \eqref{SW} uniquely determines the potential $K$.  

By writing the (scalar) Klein--Gordon-type equation 
\begin{equation}\label{KG}
\partial_t^2 w = \Delta w - w + (K\ast |w|^2)w
\end{equation}
as a first-order system, the authors of \cite{SasakiWatanabe} additionally proved that the small-data scattering map determines the potential $K$ in the setting of \eqref{KG} (with further structural hypotheses on $K$ in the setting of real-valued solutions).  In the setting of scalar NLS-type equations of the form \eqref{nls}, however, the methods of \cite{SasakiWatanabe} only determine the value of $\widehat K(0)$ (see \cite[Section~5]{SasakiWatanabe}).  

Building on the techniques of \cite{SasakiWatanabe}, Sasaki showed that the small-data scattering map for \eqref{nls} uniquely determines the potential $K$ in the setting of rapidly decaying potentials in three space dimensions \cite{Sasaki3}.  More precisely, Sasaki considered potentials $K$ satisfying $e^{A|x|}K \in L^1(\R^3)$ for some $A>0$, for which one has analyticity of $\widehat K$ on $\R^3$. For such potentials, Sasaki proved that the scattering map uniquely determines $\partial^\alpha \widehat K(0)$ for any multiindex $\alpha$ (with a quantitative estimate, in fact), and hence the scattering map determines $K$ itself.

In this paper, we address the case of radial convolution kernels satisfying the significantly weaker conditions appearing in Definition~\ref{D:admissible}, which require only a mild decay condition at infinity and permit a locally integrable singularity at the origin.

Like many of the works referenced above, to prove our main result we rely fundamentally on the \emph{Born approximation} to the scattering map.  This approximation entails replacing the full solution with the first Picard iterate in the implicit formula for the scattering map.  Specifically, the Duhamel formula for \eqref{nls} shows that the scattering map $\mathcal{S}$ associated to the kernel $K$ is given by 
\[
\S(\varphi) = \varphi - i\int_{\R} e^{-it\Delta}\bigl\{[K\ast |u(t)|^2]u(t)\bigr\}\,dt,
\] 
where $u$ is the solution to \eqref{nls} that scatters to $\varphi$ as $t\to-\infty$.  In the small-data regime, this operator is well-approximated by the functional
\[
\varphi\mapsto \varphi-i\int_\R e^{-it\Delta}\bigl\{[K\ast|e^{it\Delta}\varphi|^2]e^{it\Delta}\varphi\bigr\}\,dt. 
\]
In particular, evaluating $i\langle (\S-I)(\varphi),\varphi\rangle$ and invoking the Born approximation, we find that knowledge of the scattering map determines integrals of the form 
\begin{equation}\nonumber%\label{JK}
\mathcal{J}_K(\varphi):=\int_\R\int_{\R^d}\int_{\R^d} K(x-y)|e^{it\Delta}\varphi(x)|^2|e^{it\Delta}\varphi(y)|^2\,dx\,dy\,dt;
\end{equation}
see Proposition~\ref{P:Born}.  

The proof of Theorem~\ref{T} therefore reduces to showing that the functional $\mathcal{J}_K$ uniquely determines $K$. For this task, we specialize to the case of Gaussian data.  Using the fact that the linear Schr\"odinger evolution of Gaussian data remains Gaussian for all times, we are able to compute the functional $\mathcal{J}_K$ explicitly for such data. We remark that this approach has been utilized in the related works \cite{KMV, KMV2, Murphy, ChenMurphy1, ChenMurphy2, ChenMurphy3} for the case of local nonlinearities.  In the simplest setting of nonlinearities of the form $\alpha(x)|u|^p u$, one can use rescaled Gaussians and an approximate identity argument to determine the pointwise values of the coefficient $\alpha$.  In this work we show that a similar approach may be brought to bear in the nonlocal setting with radial convolution kernels.  Instead of directly accessing the pointwise values of the kernel $K$, we find that the scattering map determines the values
\[
\langle |\nabla|^{-1} K,\psi_\sigma\rangle,\qtq{where} \psi_\sigma(x):=\exp\{-\tfrac{|x|^2}{4\sigma^2}\},\quad \sigma>0. 
\]
For radial and admissible $K$, we then show (via the Laplace transform) that such Gaussian averages uniquely determine $|\nabla|^{-1} K$ and hence $K$ itself.  

While most of the physically-relevant examples of Hartree-type nonlinearities feature radial convolution kernels, it is also natural to ask whether the analogue of Theorem~\ref{T} holds in the nonradial setting.  We believe it does, and we plan to address this question in future work. 

The rest of the paper is organized as follows: In Section~\ref{S:notation}, we introduce notation and collect several useful lemmas.  In Section~\ref{S:scatter}, we prove small-data scattering for \eqref{nls} (Theorem~\ref{T:scatter}) and derive the Born approximation (Proposition~\ref{P:Born}).  In Section~\ref{S:Gaussian}, we compute the leading term in the Born approximation in the case of Gaussian data.  Finally, in Section~\ref{S:proof}, we prove the main result, Theorem~\ref{T}.

\subsection*{Acknowledgements} Much of this work was completed while L.~C. was a Visiting Scholar at the University of Oregon under the support of Conselho Nacional de Desenvolvimento Cient\'ifico e Tecnol\'ogico - CNPq. L.~C. was partially supported by CNPq grants 07733/2023-8 and 404800/2024-6, and the FAPEMIG grant APQ-03186-24. R.~K. was supported by NSF grant DMS-2452346 and the project ANR-25-CFFS-0004 ``PhysMathEDPInteg'' of the France 2030 program. J.~M. was supported by NSF grant DMS-2350225 and Simons Foundation grant MPS-TSM-00006622.  M.~V. was supported by NSF grant DMS-2348018.

%%%%%%%%%%%%%%%
%%%%%%%%%%%%%%%
%%%%%%%%%%%%%%%
%%%%%%%%%%%%%%%
\section{Notation and preliminaries}\label{S:notation}

We use the notation $A\lesssim B$ to denote $A\leq CB$ for some $C>0$.  We write $\langle \cdot,\cdot\rangle$ for the standard $L^2$ inner product, complex linear in the second argument.  We use the following convention for the Fourier transform:
\[
\F(f)(\xi):=\widehat f(\xi) :=(2\pi)^{-\frac{d}{2}}\int_{\R^d} e^{-ix\cdot\xi}f(x)\,dx 
\]
so that 
\[
f(x) =\F^{-1}(f)(x)= (2\pi)^{-\frac{d}{2}}\int_{\R^d} \!e^{ix\cdot\xi} \hat f(\xi)\,d\xi.
\]

We recall that Fourier multiplier operators may be expressed as convolution operators as follows: 
\[
[\F^{-1} m \F] f = (2\pi)^{-\frac{d}{2}} [\F^{-1} m] \ast f.
\]

The linear Schr\"odinger propagator is defined as the Fourier multiplier operator $e^{it\Delta}=\F^{-1} e^{-it|\xi|^2}\F$.

Throughout the paper we let $\psi$ denote the standard Gaussian
\begin{equation}\label{psi}
\psi(x) := \exp\{-\tfrac{|x|^2}{4}\},
\end{equation}
and we write $\psi_\sigma$ for the rescaling
\begin{equation}\label{psisigma}
\psi_\sigma(x):=\psi(\tfrac{x}{\sigma}), \quad \sigma>0.
\end{equation}
The Gaussian evolves under the linear Schr\"odinger equation as follows:
\begin{equation}\label{gaussian-solution}
e^{it\Delta}\psi(x) = (1+it)^{-\frac{d}{2}}\exp\{-\tfrac{|x|^2}{4(1+it)}\},
\end{equation}
By the scaling symmetry for the Schr\"odinger equation, we have
\begin{equation}\label{rescaled}
e^{it\Delta}\psi_\sigma(x) = [e^{i\sigma^{-2} t\Delta}\psi](\tfrac{x}{\sigma}).
\end{equation}

For $s>-d$, the operators $|\nabla|^s$ are defined as the Fourier multiplier operators $|\nabla|^s=\F^{-1} |\xi|^s\F$. The Sobolev spaces $\dot H^s(\R^d)$ and $ H^s(\R^d)$ are defined as the completion of Schwartz functions with respect to the norms
\[
\|f\|_{\dot H^s}= \||\nabla|^sf\|_{L^2} \qtq{and} \|f\|_{H^s}= \|f\|_{L^2} + \|f\|_{\dot H^s}.
\]

By direct calculation, for $\alpha\in(0,d)$ we have 
\begin{equation}\label{purepower}
2^{\frac{\alpha}{2}}\Gamma(\tfrac{\alpha}{2})|\xi|^{-\alpha} = \F\bigl[ 2^{\frac{d-\alpha}{2}}\Gamma(\tfrac{d-\alpha}{2})|x|^{-(d-\alpha)}\bigr],
\end{equation}
where $\Gamma$ denotes the Gamma function:
\begin{equation}\label{Gamma}
\Gamma(z) := \int_0^\infty r^z e^{-r}\tfrac{dr}{r},\quad \Re z>0.
\end{equation}
In particular, for $s\in(0,d)$ the operator $|\nabla|^{-s}$ is given by convolution with $C|x|^{-(d-s)}$ for some $C=C(d,s)>0$. 
%Hint: Compute $\int_0^{\infty} e^{-t|x|^2} t^{\frac{d-\alpha}{2}} \tfrac{dt}{t} as well as its Fourier transform.
%and
%\[
%\F^{-1}[FG] = (2\pi)^{-\frac{d}{2}}\check F \ast \check G. 
%\]

We will employ the standard Strichartz estimates for the linear Schr\"odinger propagator.  We call a pair $(q,r)\in[2,\infty]\times[2,\infty]$ \emph{Schr\"odinger admissible} in $d$ dimensions if $\tfrac{2}{q}+\tfrac{d}{r}=\tfrac{d}{2}$ and $(d,q,r)\neq(2,2,\infty)$.

\begin{lemma}[Strichartz estimates, \cite{GinibreVelo, KeelTao, Strichartz}]\label{L:S} For any Schr\"odinger admissible pair $(q,r)$ and any $\varphi\in L^2(\R^d)$, 
\[
\|e^{it\Delta}\varphi\|_{L_t^q L_x^r(\R\times\R^d)}\lesssim \|\varphi\|_{L^2}. 
\]
Given an interval $I\ni t_0$, Schr\"odinger admissible pairs $(q,r),(\tilde q,\tilde r)$, and $F\in L_t^{\tilde q'}L_x^{\tilde r'}(I\times\R^d)$, we have
\[
\biggl\| \int_{t_0}^t e^{i(t-s)\Delta}F(s)\,ds \biggr\|_{L_t^q L_x^r(I\times\R^d)}\lesssim \|F\|_{L_t^{\tilde q'}L_x^{\tilde r'}(I\times\R^d)}. 
\]
\end{lemma}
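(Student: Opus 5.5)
The plan is the classical route: a dispersive estimate, then the $TT^*$ method, with the endpoint and the retarded (truncated) inhomogeneous estimate handled separately. The first step is the dispersive bound
\[
\|e^{it\Delta}f\|_{L_x^\infty}\lesssim |t|^{-\frac d2}\|f\|_{L_x^1},\qquad t\neq 0.
\]
It follows from the explicit convolution kernel $e^{it\Delta}f=(4\pi i t)^{-\frac d2}e^{i|x|^2/4t}\ast f$, which one obtains from \eqref{gaussian-solution} by analytic continuation, or directly by computing $\F^{-1}e^{-it|\xi|^2}$ with the convolution formula stated above. Interpolating with the unitarity $\|e^{it\Delta}f\|_{L^2}=\|f\|_{L^2}$ (Riesz--Thorin) then gives
\[
\|e^{it\Delta}f\|_{L^r}\lesssim|t|^{-d(\frac12-\frac1r)}\|f\|_{L^{r'}},\qquad 2\leq r\leq\infty.
\]

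For the homogeneous estimate I would use $TT^*$. Set $T\varphi=e^{it\Delta}\varphi$. Boundedness of $T:L^2\to L_t^qL_x^r$ is equivalent to boundedness of
\[
TT^*F=\int_\R e^{i(t-s)\Delta}F(s)\,ds \quad\text{from } L_t^{q'}L_x^{r'} \text{ to } L_t^qL_x^r.
\]
By Minkowski and the interpolated dispersive bound,
\[
\|TT^*F(t)\|_{L^r_x}\lesssim\int|t-s|^{-\frac 2q}\|F(s)\|_{L_x^{r'}}\,ds,
\]
since admissibility gives $d(\frac12-\frac1r)=\frac2q$. When $q>2$ the exponent $\frac2q$ lies in $(0,1)$, and one-dimensional Hardy--Littlewood--Sobolev with $\frac1{q'}+\frac2q=1+\frac1q$ closes the estimate. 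The case $q=\infty$, $r=2$ is simply unitarity.

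The same computation, with $\mathbf 1_{s<t}$ inserted and the time variables restricted to $I$, gives the inhomogeneous estimate for any two admissible pairs with $q,\tilde q>2$. Off the diagonal case $(\tilde q,\tilde r)=(q,r)$, I would factor through $L^2$: the untruncated operator $\int_I e^{i(t-s)\Delta}F(s)\,ds=T\bigl(\int_I e^{-is\Delta}F(s)\,ds\bigr)$ is bounded by the homogeneous estimate composed with its dual. Passing from the untruncated integral to the retarded one $\int_{t_0}^t$ then uses the Christ--Kiselev lemma, which applies because $\tilde q'<q$ whenever $q>2$.

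The main obstacle is the endpoint $q=2$, $r=\frac{2d}{d-2}$ (available since $d\geq3$). There HLS fails because $|t|^{-1}$ is not locally integrable, and Christ--Kiselev also fails since $\tilde q'=q=2$. For this step I would follow Keel--Tao. First reduce to the bilinear form
\[
\iint_{s<t}\langle e^{-is\Delta}F(s),e^{-it\Delta}G(t)\rangle\,ds\,dt .
\]
Then decompose dyadically in $|t-s|\sim2^j$ via a Whitney decomposition, and prove for each $j$ a family of bilinear bounds in exponents $(a,b)$ near the endpoint, obtained by interpolating the $L^1\to L^\infty$ and $L^2\to L^2$ estimates. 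These bounds come with a gain $2^{-j\beta(a,b)}$ whose sign changes as $(a,b)$ moves around the endpoint. Summing over $j$ by real (bilinear) interpolation in Lorentz/Besov-type sequence spaces recovers the endpoint. The same bilinear argument handles the retarded endpoint inhomogeneous estimate directly, because the truncation $s<t$ is built into the decomposition.
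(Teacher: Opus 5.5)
Your outline is correct. It is the standard argument behind the references the paper cites: the dispersive bound together with $TT^*$ and one-dimensional Hardy--Littlewood--Sobolev for $q>2$, and the Christ--Kiselev lemma for the retarded estimate. Christ--Kiselev also covers the mixed case $q=2<\tilde q$, since then $\tilde q'<q$. That leaves only the double endpoint $q=\tilde q=2$ for the Keel--Tao bilinear dyadic argument with real interpolation.

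The paper gives no proof of this lemma; it quotes Strichartz, Ginibre--Velo, and Keel--Tao. Your sketch reproduces exactly the arguments of those cited works.
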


The following theorem is a direct consequence of \cite[Theorem~1]{GrafakosOh}, which provides a generalization of the usual fractional product rule to a wider range of exponents and powers of $|\nabla|$. 

\begin{theorem}[Fractional product rule]\label{T:FL} Let $1\leq r<\infty$ and $1<p_1,p_2,q_1,q_2\leq\infty$ satisfy
\[
\tfrac{1}{r}=\tfrac{1}{p_1}+\tfrac{1}{q_1}=\tfrac{1}{p_2}+\tfrac{1}{q_2}.
\]
For any $s>0$ and any Schwartz functions $f$ and $g$, 
\[
\| |\nabla|^s(fg)\|_{L^r(\R^d)} \lesssim \| |\nabla|^s f\|_{L^{p_1}(\R^d)}\|g\|_{L^{q_1}(\R^d)} + \|f\|_{L^{p_2}(\R^d)}\||\nabla|^s g\|_{L^{q_2}(\R^d)}.
\]
\end{theorem}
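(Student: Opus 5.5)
The plan is to obtain the estimate by checking that the stated range of exponents lies inside the hypotheses of \cite[Theorem~1]{GrafakosOh}. No new harmonic analysis is needed.

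Grafakos and Oh prove the Kato--Ponce inequality
\[
\| |\nabla|^s(fg)\|_{L^r} \lesssim \| |\nabla|^s f\|_{L^{p_1}}\|g\|_{L^{q_1}} + \|f\|_{L^{p_2}}\||\nabla|^s g\|_{L^{q_2}}
\]
for Schwartz $f,g$. Their hypotheses are:
\begin{itemize}
\item $\tfrac12<r<\infty$;
\item $1<p_1,p_2,q_1,q_2\leq\infty$ with $\tfrac1r=\tfrac1{p_1}+\tfrac1{q_1}=\tfrac1{p_2}+\tfrac1{q_2}$;
\item $s>\max\{0,\tfrac dr-d\}$ or $s\in 2\mathbb{N}$.
\end{itemize}
They state the result for both the homogeneous operator $D^s=|\nabla|^s$ and the inhomogeneous $J^s=(1-\Delta)^{s/2}$. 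I would take care to quote the homogeneous version, since that is the one appearing in Theorem~\ref{T:FL}.

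The verification then runs as follows.
\begin{enumerate}
\item Our assumption $1\leq r<\infty$ is contained in $(\tfrac12,\infty)$.
\item The Hölder relations and the ranges of $p_i,q_i$ in Theorem~\ref{T:FL} coincide with theirs, including the endpoint $\infty$ for the exponents placed on the undifferentiated factors.
\item Since $r\geq1$, we have $\tfrac dr-d\leq 0$. Hence the condition $s>\max\{0,\tfrac dr-d\}$ reduces to $s>0$, which is exactly our assumption on $s$.
\end{enumerate}
It follows that every configuration allowed in Theorem~\ref{T:FL} is a special case of \cite[Theorem~1]{GrafakosOh}, and the inequality follows at once.

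The only points that require care are bookkeeping ones. First, the endpoint cases $q_1=\infty$ or $q_2=\infty$ must be explicitly covered by the cited theorem. Second, the case $r=1$ must be included; it is, because the result holds for all $r>\tfrac12$ with no restriction $r>1$. If one wished to avoid relying on these endpoint features of \cite{GrafakosOh}, a fallback would be the classical Christ--Weinstein/Kenig--Ponce--Vega fractional Leibniz rule, proved via a Coifman--Meyer paraproduct decomposition. That route would cover $1<r<\infty$ with $1<p_i,q_i\leq\infty$ and would be enough for the Strichartz-type applications later in the paper.
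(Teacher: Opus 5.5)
Your proposal is correct and takes the same approach as the paper, which also obtains Theorem~\ref{T:FL} simply by citing \cite[Theorem~1]{GrafakosOh}. Your check that the exponent ranges match and that $r\geq 1$ reduces $s>\max\{0,\tfrac dr-d\}$ to $s>0$ is exactly what makes the result a direct consequence of that theorem, as the paper asserts.
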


We will also need the following technical lemma concerning the inverse gradient of an admissible kernel.

\begin{lemma}\label{L:inverse-K} Suppose $K$ is admissible in the sense of Definition~\ref{D:admissible} with parameter $c\in[2,d)$.  Then $|\nabla|^{-1}K$ is continuous on $\R^d\backslash\{0\}$ and satisfies
\begin{equation}\label{inverse-K-bounds}
||\nabla|^{-1} K(z)| \lesssim |z|^{-1}+|z|^{-c+1}. 
\end{equation}
\end{lemma}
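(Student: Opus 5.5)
The plan is to use the representation noted after \eqref{purepower}: $|\nabla|^{-1}$ is convolution with $C|x|^{-(d-1)}$, where $C=C(d,1)>0$. We therefore set
\[
|\nabla|^{-1}K(z) := C\int_{\R^d} |z-y|^{-(d-1)}K(y)\,dy
\]
and check two things: this integral converges absolutely for $z\neq0$, and it obeys \eqref{inverse-K-bounds}. Using \eqref{Kbds}, we bound $|K(y)|\le C(|y|^{-2}+|y|^{-c})$. It then suffices to estimate the two model integrals
\[
I_a(z):=\int_{\R^d}|z-y|^{-(d-1)}|y|^{-a}\,dy,\qquad a\in\{2,c\}.
\]
Since $2\le a\le c<d$, the singularity at $y=0$ is integrable. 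So is the one at $y=z$, since $d-1<d$. At infinity the integrand decays like $|y|^{-(d-1)-a}$, and $d-1+a>d$ because $a\ge2>1$; this is integrable. By scaling $y=|z|w$ we get $I_a(z)=|z|^{d-(d-1)-a}I_a(z/|z|)=|z|^{1-a}I_a(e)$. By rotation invariance $I_a(e)$ is a finite constant. This gives $||\nabla|^{-1}K(z)|\lesssim |z|^{-1}+|z|^{1-c}$, which is exactly \eqref{inverse-K-bounds}.

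We also need this convolution to agree with the Fourier-multiplier definition of $|\nabla|^{-1}K$. The decomposition $K=K\mathbf 1_{|x|\le1}+K\mathbf 1_{|x|>1}$ places $K$ in $L^1+L^p$ for some $p$ with $d/c<p<d$; for example, $K\mathbf 1_{|x|>1}\in L^p$ for $p>d/c$, and $d/c<d/2$. Then $|\nabla|^{-1}$ acts on both pieces as the Riesz potential in the distributional sense, by Hardy--Littlewood--Sobolev on the $L^p$ piece and directly on the $L^1$ piece. This identification is routine, and I would state it briefly.

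For continuity at a fixed $z_0\neq0$, set $r=|z_0|/4$ and restrict to $|z-z_0|<r$. We split the integral into the region $|y-z_0|<2r$ and its complement.

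On the complement, $|z-y|\ge |y-z_0|-r\gtrsim |y-z_0|$. The integrand is therefore dominated by $|y-z_0|^{-(d-1)}|K(y)|\mathbf 1_{|y-z_0|\ge 2r}$, which is integrable by the computation above. Continuity of $z\mapsto|z-y|^{-(d-1)}$ for $y$ away from $z$ then lets dominated convergence apply.

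On the near region, $|y|\ge 2r$, so $K$ is bounded there by a constant $M$. The contribution is then controlled by
\[
M\int_{|y-z_0|<2r}\bigl(|z-y|^{-(d-1)}\bigr)\,dy .
\]
We handle this by the uniform integrability of $|\cdot|^{-(d-1)}$ on bounded sets. Concretely, the near piece is the convolution of the bounded, compactly supported function $K\mathbf 1_{|y-z_0|<2r}$ with the locally integrable kernel $|x|^{-(d-1)}$. Such a convolution is continuous, since $L^\infty_c * L^1_{loc}$ is continuous by continuity of translation in $L^1$.

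The main obstacle is the near-diagonal piece in the continuity argument, where the kernel singularity at $y=z$ moves with $z$. The fix is the local boundedness of $K$ away from the origin, which turns this piece into an $L^\infty*L^1$ convolution. Everything else reduces to the scaling computation for $I_a$.
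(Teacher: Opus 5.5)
Your proof is correct, but it is organized differently from the paper's.

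\textbf{The paper's route.} It splits $K=K_2+K_c$ at the unit sphere, so that $|K_a|\lesssim|x|^{-a}$. Then, for $|z|\ge4\delta$, it splits both $L=|x|^{1-d}$ and $K_a$ at radius $\delta$. The piece $L_{\le\delta}*K_{\le\delta}$ vanishes by support. The other three pieces are convolutions of dual Lebesgue pairs: $L^{r'}*L^r$ with $r\in(\tfrac{d}{a},d)$, $L^\infty*L^1$, and $L^1*L^\infty$. This single decomposition gives continuity on $\{|z|\ge4\delta\}$ and, via H\"older, the bound $\delta^{1-a}$. Taking $\delta=|z|/4$ then yields the estimate.

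\textbf{Your route.} You get the bound from positivity together with the exact homogeneity and rotation invariance of the model integrals $I_a$. This avoids choosing $r$ and computing truncated $L^p$ norms, and it makes the constant explicit as $I_a(e)$. The cost is that continuity needs a separate near/far argument centered at $z_0$: dominated convergence off the diagonal and $L^1*L^\infty$ continuity near it. In the paper, continuity comes for free from the same splitting.

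\textbf{A small slip.} In your aside on identifying the multiplier with the Riesz potential, the tail $K\mathbf 1_{|x|>1}$ is only $O(|x|^{-2})$. It therefore lies in $L^p$ for $p>\tfrac{d}{2}$, not for all $p>\tfrac{d}{c}$; the two ranges differ when $c>2$. Choosing $p\in(\tfrac{d}{2},d)$ fixes this. The paper takes that identification for granted in any case.
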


\begin{proof}
By \eqref{purepower}, for any $z\neq 0$,
\[
|\nabla|^{-1} K(z) = C[L\ast K](z), \qtq{where} L(y):=|y|^{1-d}. 
\] 

We now let $B$ denote the unit ball in $\R^d$ and decompose
\[
K=K_2+K_c,\qtq{where} K_2=K[1-\chi_B] \qtq{and} K_c=K\chi_B. 
\]
By Definition~\ref{D:admissible}, we have $|K_a(x)|\lesssim |x|^{-a}$ for $a\in\{2,c\}$.  The proof now reduces to proving that for $a\in\{2,c\}$, $L\ast K_a$ is continuous on $\R^d\backslash\{0\}$ and satisfies
\[
|L\ast K_a(z)| \lesssim |z|^{-a+1} \qtq{for} z\in\R^d\backslash\{0\}.
\]

We fix $a\in\{2,c\}$ and $\delta>0$. We let $\chi_{\leq\delta}$ denote the characteristic function of $\{|z|\leq\delta\}$ and write $\chi_{>\delta}=1-\chi_{\leq\delta}$. We set $K_{\leq\delta}=K_a\chi_{\leq\delta}$ and define $K_{>\delta}$, $L_{\leq\delta}$, and $L_{>\delta}$ analogously. 

Support considerations yield that for $|z|\geq 4\delta$, we have
\begin{equation}\label{LastK}
[L\ast K_a](z)  = [L_{>\delta}\ast K_{>\delta}](z) + [L_{>\delta}\ast K_{\leq\delta}](z) + [L_{\leq \delta}\ast K_{>\delta}](z). 
\end{equation}
As the convolution of two functions in dual $L^p$-spaces is continuous, \eqref{LastK} expresses $L\ast K_a$ as the sum of three continuous functions in the region $|z|\geq 4\delta$.  Indeed, choosing an exponent $r\in(\tfrac{d}{a},d)$, we may estimate 
\begin{align}\label{11:47}
|L\ast K_a(z)| & \leq \|L_{>\delta}\|_{L^{r'}}\|K_{>\delta}\|_{L^r} + \|L_{>\delta}\|_{L^\infty} \|K_{\leq\delta}\|_{L^1} + \|L_{\leq\delta}\|_{L^1}\|K_{>\delta}\|_{L^\infty} \notag\\
& \lesssim \delta^{-a+1}
\end{align}
uniformly for $|z|\geq 4\delta$. As $\delta>0$ was arbitrary, this shows that $L\ast K_a$ is continuous on $\R^d\backslash\{0\}$.  Finally, taking $\delta=\tfrac14|z|$ in \eqref{11:47} yields 
\[
\bigl||\nabla|^{-1}K_a(z)\bigr| \lesssim |z|^{-a+1}
\]
and \eqref{inverse-K-bounds} follows.  \end{proof}

%%%%%%%%%%%%%%%
%%%%%%%%%%%%%%%
%%%%%%%%%%%%%%%
%%%%%%%%%%%%%%%
\section{Small-data scattering and the Born approximation}\label{S:scatter}

In this section we first prove a small-data scattering result for admissible convolution kernels and define the scattering map. We will then analyze the accuracy of the Born approximation to the scattering map. 

\begin{theorem}\label{T:scatter} Fix $d\geq 3$ and let $K$ be admissible in the sense of Definition~\ref{D:admissible} with parameter $c\in[2,d)$.  Define
\[
\mathcal{B}_\eta=\{f\in H^{s_c}(\R^d): \|f\|_{H^{s_c}(\R^d)}<\eta\},\qtq{where} s_c:=\tfrac{c}{2}-1.
\]
There exists $\eta>0$ sufficiently small so that for any $u_-\in \mathcal{B}_\eta$, there is a unique solution $u:\R\times\R^d\to\C$ to \eqref{nls} and a unique $u_+\in H^{s_c}(\R^d)$ such that
\[
\| |\nabla|^\ell u \|_{L_t^4 L_x^{\frac{2d}{d-1}}(\R\times\R^d)} \lesssim \|u_-\|_{\dot H^\ell(\R^d)}\qtq{for}\ell\in\{0,s_c\}
\]
and
\[
\lim_{t\to\pm\infty}\|u(t)-e^{it\Delta} u_\pm \|_{H^{s_c}(\R^d)} = 0. 
\]
\end{theorem}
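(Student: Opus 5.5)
We construct the solution by a contraction mapping argument in Strichartz spaces, posed directly at $t=-\infty$. Write the Duhamel formula
\[
u(t)=\Phi(u)(t):=e^{it\Delta}u_- - i\int_{-\infty}^t e^{i(t-s)\Delta}\bigl[(K\ast|u|^2)u\bigr](s)\,ds,
\]
and work in the space
\[
X=\bigl\{u:\ \|u\|_{L_t^4L_x^{\frac{2d}{d-1}}}\le 2C\|u_-\|_{L^2},\ \||\nabla|^{s_c}u\|_{L_t^4L_x^{\frac{2d}{d-1}}}\le 2C\|u_-\|_{\dot H^{s_c}}\bigr\},
\]
with metric $d(u,v)=\|u-v\|_{L_t^4L_x^{\frac{2d}{d-1}}}$. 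The pair $(4,\tfrac{2d}{d-1})$ is Schr\"odinger admissible, so Lemma~\ref{L:S} controls the linear term by $C\|u_-\|_{\dot H^\ell}$ for $\ell\in\{0,s_c\}$. Since $|\nabla|^{s_c}$ commutes with $e^{it\Delta}$, it remains to estimate $|\nabla|^{\ell}[(K\ast|u|^2)u]$ in a dual Strichartz norm. We use the same pair, i.e. $L_t^{4/3}L_x^{\frac{2d}{d+1}}$.

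\emph{Splitting the kernel.} As in the proof of Lemma~\ref{L:inverse-K}, split $K=K_2+K_c$ with $|K_a(x)|\lesssim|x|^{-a}$, so that $|K_a\ast f|\lesssim |x|^{-a}\ast|f|\simeq|\nabla|^{-(d-a)}|f|$ pointwise. The operator $f\mapsto |x|^{-a}\ast f$ is then controlled by Hardy--Littlewood--Sobolev (fractional integration of order $d-a>0$). This is where $c<d$ enters.

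\emph{Case $\ell=0$, $a=2$.} H\"older and HLS give
\[
\|(K_2\ast|u|^2)u\|_{L_x^{\frac{2d}{d+1}}}\lesssim \||x|^{-2}\ast|u|^2\|_{L_x^{d/2}}\|u\|_{L_x^{\frac{2d}{d-1}}},\qquad \||x|^{-2}\ast|u|^2\|_{L^{d/2}}\lesssim\|u\|_{L^{\frac{2d}{d-1}}}^2.
\]
The exponents check out because $\tfrac{d-1}{d}-\tfrac{d-2}{d}=\tfrac1d$, matching $\tfrac1{d/2}=\tfrac1{\tfrac{d}{d-1}}-\tfrac{d-2}{d}$. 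This yields an $L_t^{4/3}$ bound by $\|u\|_{L_t^4L_x^{\frac{2d}{d-1}}}^3$, which is scale-consistent at $L^2$ level.

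\emph{Case $a=c$.} The term $K_c$ is $\dot H^{s_c}$-critical. We estimate $|x|^{-c}\ast|u|^2$ using Sobolev embedding: $\||\nabla|^{s_c}u\|_{L^{\frac{2d}{d-1}}}$ controls $u$ in $L^{p}$ with $\tfrac1p=\tfrac{d-1}{2d}-\tfrac{s_c}{d}$. HLS with order $d-c$ then places $|x|^{-c}\ast|u|^2$ in a suitable $L^q$. I expect the exponents to close so that the $L^2$-level estimate of the $K_c$ part is bounded by $\|u\|_{L_t^4L_x^{\frac{2d}{d-1}}}\,\||\nabla|^{s_c}u\|_{L_t^4L_x^{\frac{2d}{d-1}}}^2$. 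A convenient alternative is to use $|K_c|\lesssim|x|^{-c}\chi_B\le|x|^{-c'}$ for any $c'\in[2,c]$ when $|x|\le1$, and interpolate.

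\emph{The derivative estimate ($\ell=s_c$).} This is the main obstacle. Apply Theorem~\ref{T:FL} to the product $(K\ast|u|^2)\cdot u$. The term where $|\nabla|^{s_c}$ falls on $u$ is handled as above. For the other term we need $|\nabla|^{s_c}(K_a\ast|u|^2)$. Since $K$ is not smooth, we cannot move derivatives onto $K$. Instead we use that convolution with $K_a$ is still dominated by a fractional integration, which commutes with $|\nabla|^{s_c}$ only at the level of the positive kernel $|x|^{-a}$.

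To handle this, I would first try writing $K_a\ast|u|^2$ and estimating $|\nabla|^{s_c}(K_a\ast g)=K_a\ast|\nabla|^{s_c}g$, which is valid since convolutions commute. We then bound $|K_a\ast h|\le|x|^{-a}\ast|h|$ and apply HLS to $h=|\nabla|^{s_c}|u|^2$. Finally, Theorem~\ref{T:FL} bounds $\||\nabla|^{s_c}|u|^2\|$ by $\||\nabla|^{s_c}u\|\,\|u\|$ in appropriate $L^p$ spaces. Here Theorem~\ref{T:FL}'s flexibility (allowing $r$ near $1$) is needed, since the HLS input exponent can be close to $1$ when $a$ is near $d$.

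In all cases the nonlinear estimate has the form
\[
\||\nabla|^\ell[(K\ast|u|^2)u]\|_{L_t^{4/3}L_x^{\frac{2d}{d+1}}}\lesssim \||\nabla|^\ell u\|_{S}\,\|u\|_{S^{s_c}}^2,\qquad \|u\|_{S^{s_c}}:=\|u\|_{L_t^4L_x^{\frac{2d}{d-1}}}+\||\nabla|^{s_c}u\|_{L_t^4L_x^{\frac{2d}{d-1}}}.
\]
Difference estimates follow in the same way, without derivatives.

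\emph{Conclusion.} For $\eta$ small, $\Phi$ maps $X$ into itself and contracts. $X$ is complete under $d$ because the derivative bounds are weakly closed. This gives the unique solution with $e^{-it\Delta}u(t)\to u_-$ as $t\to-\infty$. Define
\[
u_+=u_--i\int_\R e^{-is\Delta}\bigl[(K\ast|u|^2)u\bigr]\,ds.
\]
The dual Strichartz estimate over $[t,\infty)$, together with the finiteness of the space-time norms, gives the convergence $\|u(t)-e^{it\Delta}u_\pm\|_{H^{s_c}}\to0$, and uniqueness of $u_+$ is immediate.
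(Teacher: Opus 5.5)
Your proposal matches the paper's proof essentially step for step. The paper also uses a contraction in $L_t^4L_x^{\frac{2d}{d-1}}$ with and without $|\nabla|^{s_c}$, splits $K=K_2+K_c$, and applies H\"older into $L_t^2L_x^d\times L_t^4L_x^{\frac{2d}{d-1}}$, then HLS and the Sobolev bound $\|u\|_{L_x^{\frac{2d}{d+1-a}}}\lesssim\||\nabla|^{\frac a2-1}u\|_{L_x^{\frac{2d}{d-1}}}$ (which is exactly how your $a=c$ case closes), and commutes $|\nabla|^{s_c}$ through $K_a$ before applying HLS and Theorem~\ref{T:FL}. Two slips: your $L^{d/2}$ should read $L^{d}$, as your own exponent count $\tfrac1d$ shows; and your aside $|x|^{-c}\chi_B\le|x|^{-c'}$ on the unit ball requires $c'\ge c$, not $c'\le c$.

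Basing the Duhamel integral at $t=-\infty$, as you do, is the formulation that actually matches the statement, with Strichartz extended by a limiting argument. The paper's $\int_0^t$ instead produces the solution with $u(0)=u_-$, rather than the one scattering to $u_-$ as $t\to-\infty$.
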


\begin{definition}\label{D:scatter} Given an admissible kernel $K$ with parameter $c\in[2,d)$, we may use Theorem~\ref{T:scatter} to define the \emph{scattering map} $\S:\mathcal{B}_\eta\to H^{s_c}(\R^d)$ by $\S(u_-)=u_+$. 
\end{definition}

\begin{proof} For $\eta>0$ to be specified below and $u_-\in \mathcal{B}_\eta$, we write
\[
\|u\|_X = \|u\|_{L_t^4 L_x^{\frac{2d}{d-1}}(\R\times\R^d)} \qtq{and} \|F\|_{X'}=\|F\|_{L_t^{\frac43} L_x^{\frac{2d}{d+1}}(\R\times\R^d)}
\]
and define the complete metric space $(Y,d)$ by
\[
Y=\{u:\R\times\R^d\to\C: \|u\|_X \leq 2C\|u_-\|_{L^2},\quad \||\nabla|^{s_c} u\|_X \leq 2C\||\nabla|^{s_c}u_-\|_{L^2}\},
\]
equipped with the metric
\[
d(u,v) = \|u-v\|_X.
\]
Here $C\geq1$ is a universal constant that encodes the implicit constants appearing in the estimates below. 

We now define
\[
\Phi(u) := e^{it\Delta}u_- - i\int_0^t e^{i(t-s)\Delta} F(u(s))\,ds, \quad F(u):= (K\ast |u|^2)u.
\]
We will show that $\Phi:Y\to Y$ and that $\Phi$ is a contraction for $\eta$ small enough.

Let $B$ denote the unit ball in $\R^d$.  In what follows, it will be convenient to decompose 
\[
K=K_2 + K_c, \qtq{where}K_2=K[1-\chi_B]\qtq{and}K_c=K\chi_B.
\]
By Definition~\ref{D:admissible}, we then have that $|K_a(x)| \lesssim |x|^{-a}$, so that $K_a\in L^{\frac{d}{a},\infty}$ for $a\in\{2,c\}$.

Let $u\in Y$. Throughout the proof, all space-time norms will be taken over $\R\times\R^d$.  Using Strichartz estimates, the Hardy--Littlewood--Sobolev inequality, and Sobolev embedding, we may bound
\begin{align*}
\|\Phi(u)\|_X & \lesssim \|u_-\|_{L^2} +\sum_{a\in\{2,c\}} \|(K_a\ast |u|^2)u\|_{X'} \\
& \lesssim \|u_-\|_{L^2} + \sum_{a\in\{2,c\}} \| |x|^{-a}\ast |u|^2\|_{L_t^2 L_x^d}\|u\|_X \\
& \lesssim \|u_-\|_{L^2} + \sum_{a\in\{2,c\}} \||x|^{-a}\|_{L^{\frac{d}{a},\infty}} \|u\|_{L_t^4 L_x^{\frac{2d}{d+1-a}}}^2\|u_-\|_{L^2} \\
& \lesssim \|u_-\|_{L^2} + \sum_{a\in\{2,c\}} \| |\nabla|^{\frac{a}{2}-1} u\|_X^2 \|u_-\|_{L^2} \\
& \lesssim \|u_-\|_{L^2} + \eta^2 \|u_-\|_{L^2} \\
& \leq 2C\|u_-\|_{L^2},
\end{align*}
provided $\eta$ is sufficiently small. 

We turn to the estimate with derivatives. We first apply Lemma~\ref{L:S} to obtain
\begin{align*}
\| |\nabla|^{s_c} \Phi(u)\|_X \lesssim \| |\nabla|^{s_c} u_-\|_{L^2} + \sum_{a\in\{2,c\}} \| |\nabla|^{s_c}[(K_a\ast |u|^2)u]\|_{X'}.
\end{align*}
To bound the nonlinear terms we will use Theorem~\ref{T:FL} and H\"older's inequality.  For $a\in\{2,c\}$, we estimate
\begin{align*}
\| |&\nabla|^{s_c}[(K_a\ast |u|^2)u]\|_{X'} \\
& \lesssim \| K_a\ast[|\nabla|^{s_c} |u|^2]\|_{L_t^2 L_x^{\frac{2d}{a}}} \|u\|_{L_t^4 L_x^{\frac{2d}{d+1-a}}} + \| K_a\ast |u|^2\|_{L_t^2 L_x^d} \| |\nabla|^{s_c} u\|_{X} \\
& \lesssim \| |x|^{-a}\|_{L^{\frac{d}{a},\infty}}\bigl\{ \| |\nabla|^{s_c} |u|^2\|_{L_t^2 L_x^{\frac{2d}{2d-a}}} \| u\|_{L_t^4 L_x^{\frac{2d}{d+1-a}}} + \|u\|_{L_t^4 L_x^{\frac{2d}{d+1-a}}}^2 \| |\nabla|^{s_c} u\|_X\bigr\} \\
& \lesssim \|u\|_{L_t^4 L_x^{\frac{2d}{d+1-a}}}^2 \| |\nabla|^{s_c} u\|_X \\
& \lesssim \|  |\nabla|^{\frac{a}{2}-1} u\|_X \| |\nabla|^{s_c} u\|_X \\
& \lesssim \eta^2 \||\nabla|^{s_c} u_-\|_{L^2}. 
\end{align*}
Continuing from above, we derive
\[
\||\nabla|^{s_c}\Phi(u)\|_X \lesssim \||\nabla|^{s_c} u_-\|_{L^2} + \eta^2 \||\nabla|^{s_c} u_-\|_{L^2} \leq 2C\| |\nabla|^{s_c} u_-\|_{L^2},
\]
provided $\eta$ is sufficiently small.  This shows that $\Phi:Y\to Y$. 

To see that $\Phi$ is a contraction, we let $u,v\in Y$ and first apply Strichartz estimates to obtain
\[
\|\Phi(u) - \Phi(v)\|_X \lesssim \|F(u)-F(v)\|_{X'}.
\]
We now write
\begin{equation}\label{F-diff}
F(u) - F(v) = (K\ast |u|^2)(u-v)+[K\ast\{(|u|+|v|)(|u|-|v|)\}]v.
\end{equation}
For the first term, we argue as above to obtain
\begin{align*}
\| (K\ast |u|^2)(u-v)\|_{X'} & \lesssim \sum_{a\in\{2,c\}} \| |\nabla|^{\frac{a}{2}-1} u\|_{X}^2 \|u-v\|_X \\
& \lesssim \eta^2 \|u-v\|_X.
\end{align*} 
For the second term, we estimate similarly:
\begin{align*}
\| [K\ast&\{(|u|+|v|)(|u|-|v|)\}]v\|_{X'} \\
& \lesssim \sum_{a\in\{2,c\}} \| |x|^{-a}\ast\{(|u|+|v|)(|u|-|v|)\}\|_{L_t^2 L_x^{\frac{2d}{a}}} \|v\|_{L_t^4 L_x^{\frac{2d}{d+1-a}}} \\
& \lesssim \sum_{a\in\{2,c\}} \| |x|^{-a}\|_{L^{\frac{d}{a},\infty}} \||u|+|v|\|_{L_t^4 L_x^{\frac{2d}{d+1-a}}} \|u-v\|_{L_t^4 L_x^{\frac{2d}{d-1}}} \| v\|_{L_t^4 L_x^{\frac{2d}{d+1-a}}} \\
& \lesssim\sum_{a\in\{2,c\}} \bigl(\||\nabla|^{\frac{a}{2}-1}u\|_X +\||\nabla|^{\frac{a}{2}-1}v\|_X\bigr)\| |\nabla|^{\frac{a}{2}-1} v\|_X \|u-v\|_X \\
& \lesssim \eta^2 \|u-v\|_X.
\end{align*}
Thus
\[
\|\Phi(u)-\Phi(v)\|_X \lesssim \eta^2 \|u-v\|_X \leq \tfrac12 \|u-v\|_X,
\]
provided $\eta$ is sufficiently small.

It follows that $\Phi$ is a contraction on $Y$ and so that there exists a unique $u\in Y$ satisfying $u=\Phi(u)$.  This yields the desired solution $u$ obeying the stated space-time bounds.

It remains to prove scattering forward in time.  To this end, it suffices to prove that $\{e^{-it\Delta}u(t)\}$ is Cauchy in $H^{s_c}$ as $t\to\infty$. In fact, repeating the estimates above and writing $X(s,t)$ to denote $L_t^4 L_x^{\frac{2d}{d-1}}((s,t)\times\R^d)$, we may obtain
\begin{align*}
\| e^{-it\Delta}u(t)-e^{-is\Delta}u(s)\|_{H^{s_c}} & \lesssim \|F(u)\|_{X'(s,t)} + \| |\nabla|^{s_c} F(u)\|_{X'(s,t)} \\
& \lesssim \Bigl( \|u\|_{X(s,t)} + \| |\nabla|^{s_c} u\|_{X(s,t)}\Bigr)^3  \to 0 \qtq{as}s,t\to\infty.
\end{align*}
This completes the proof of the theorem. \end{proof}

%%%%%%%%%%%%%%%
%%%%%%%%%%%%%%%
%%%%%%%%%%%%%%%
%%%%%%%%%%%%%%%
 We next derive the Born approximation for the scattering map, which entails replacing the full solution by its first Picard iterate.  This allows us to approximate the implicitly-defined scattering map by an explicit functional. 

\begin{proposition}[Born approximation]\label{P:Born} Let $K$ be admissible in the sense of Definition~\ref{D:admissible}, with parameter $c\in[2,d)$. Define $s_c=\tfrac{c}{2}-1$ and let $\S:\mathcal{B}_\eta\subset H^{s_c}(\R^d)\to H^{s_c}(\R^d)$ denote the scattering map defined in Definition~\ref{D:scatter}. 

For any $\varphi\in \mathcal{B}_\eta$, 
\begin{align*}
i\langle (\S-I)(\varphi),\varphi\rangle & = \iiint K(x-y)|e^{it\Delta}\varphi(x)|^2 |e^{it\Delta}\varphi(y)|^2\,dx\,dy\,dt \\
& \quad + \mathcal{O}\bigl\{\|\varphi\|_{L^2}^2\bigl(\|\varphi\|_{L^2}^4 + \|\varphi\|_{\dot H^{s_c}}^4\bigr)\bigr\}.
\end{align*}
\end{proposition}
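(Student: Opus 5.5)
Our starting point is the Duhamel representation of the scattering map. Let $u$ be the solution from Theorem~\ref{T:scatter} with $u_-=\varphi$. Since $e^{-it\Delta}u(t)\to\varphi$ as $t\to-\infty$ and $e^{-it\Delta}u(t)\to \S(\varphi)$ as $t\to+\infty$ in $H^{s_c}$, we have
\[
\S(\varphi)=\varphi-i\int_\R e^{-it\Delta}F(u(t))\,dt,
\]
with the integral converging in $L^2$. This convergence follows from the dual Strichartz estimate and the $X'$ bounds in the proof of Theorem~\ref{T:scatter}. Pairing with $\varphi$ and using unitarity gives
\[
i\langle(\S-I)\varphi,\varphi\rangle=\int_\R\langle F(u(t)),e^{it\Delta}\varphi\rangle\,dt .
\]
If we replace $u$ by $v:=e^{it\Delta}\varphi$, the right-hand side becomes exactly $\int\langle (K\ast|v|^2)v,v\rangle\,dt$. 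This equals the main term because $K$ is real, so the quantity is real. It therefore remains to bound
\[
\int_\R\langle F(u)-F(v),v\rangle\,dt .
\]

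To bound this difference, we pair in $X'\times X$:
\[
\Bigl|\int\langle F(u)-F(v),v\rangle\,dt\Bigr|\le \|F(u)-F(v)\|_{X'}\|v\|_X\lesssim \|F(u)-F(v)\|_{X'}\|\varphi\|_{L^2}.
\]
We use the decomposition \eqref{F-diff} together with the estimates from the contraction step in the proof of Theorem~\ref{T:scatter}, with $K=K_2+K_c$ split as there. These give
\[
\|F(u)-F(v)\|_{X'}\lesssim \sum_{a\in\{2,c\}}\bigl(\||\nabla|^{\frac a2-1}u\|_X+\||\nabla|^{\frac a2-1}v\|_X\bigr)^2\|u-v\|_X .
\]
For $a=2$ the derivative is of order zero. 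For $a=c$ it is of order $s_c$. By Theorem~\ref{T:scatter} and Strichartz, the bracket is therefore $\lesssim \|\varphi\|_{L^2}+\|\varphi\|_{\dot H^{s_c}}$.

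Next we need $\|u-v\|_X$. Writing $u-v$ as the Duhamel integral, now based at $-\infty$, Strichartz gives
\[
\|u-v\|_X\lesssim\|F(u)\|_{X'}.
\]
The first estimate in the proof of Theorem~\ref{T:scatter} gives
\[
\|F(u)\|_{X'}\lesssim \sum_a\||\nabla|^{\frac a2-1}u\|_X^2\|u\|_X\lesssim(\|\varphi\|_{L^2}^2+\|\varphi\|_{\dot H^{s_c}}^2)\|\varphi\|_{L^2}.
\]
Combining the three bounds, the error is
\[
\lesssim \|\varphi\|_{L^2}^2(\|\varphi\|_{L^2}^2+\|\varphi\|_{\dot H^{s_c}}^2)^2\lesssim\|\varphi\|_{L^2}^2(\|\varphi\|_{L^2}^4+\|\varphi\|_{\dot H^{s_c}}^4),
\]
which is the claimed error.

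The main subtlety is justifying the Duhamel formula on all of $\R$, with base point $-\infty$. Theorem~\ref{T:scatter} built $u$ via a Duhamel integral based at time $0$. We will relate the two formulations through the scattering statement: the limits of $e^{-it\Delta}u(t)$ exist in $H^{s_c}$, and $F(u)\in X'$ globally. These give
\[
u(t)=e^{it\Delta}\varphi-i\int_{-\infty}^t e^{i(t-s)\Delta}F(u(s))\,ds .
\]
The same global $X'$ bounds justify exchanging the $t$-integral with the inner product. No further obstacle is expected; the key point is only that the difference estimate carries one factor of $\|u-v\|_X$, which is already cubic in $\varphi$.
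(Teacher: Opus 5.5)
Your proposal is correct and follows essentially the paper's argument. You write $\S-I$ as a Duhamel integral, pair in $X'\times X$, use \eqref{F-diff} to extract one factor of $\|u-e^{it\Delta}\varphi\|_X$, and bound that by $\|F(u)\|_{X'}$ via Strichartz; you are more careful than the paper in deriving the Duhamel formula based at $-\infty$ (the paper's contraction map is based at $t=0$) and in justifying the exchange of the time integral with the inner product, but the estimates are the same.
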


\begin{proof} Let $K$, $\S$, and $\varphi\in \mathcal{B}_\eta$ be as in the statement of the proposition. Let $u$ denote the solution to \eqref{nls} constructed in Theorem~\ref{T:scatter} that satisfies $e^{-it\Delta}u(t)\to \varphi$ as $t\to-\infty$.  Using Theorem~\ref{T:scatter}, we may write
\[
S(\varphi) = \varphi - i\iint e^{it\Delta}[(K\ast |u(t)|^2)u(t)]\,dx\,dt. 
\]
Thus it suffices to prove
\begin{equation}\label{Born-ets}
\biggl| \iint [(K\ast |u|^2)u-(K\ast |e^{it\Delta}\varphi|^2)e^{it\Delta}\varphi] \overline{e^{it\Delta}\varphi}\,dx\,dt\biggr| \lesssim \|\varphi\|_{L^2}^2\bigl(\|\varphi\|_{L^2}^4 + \|\varphi\|_{\dot H^{s_c}}^4\bigr).
\end{equation}
As in the proof of Theorem~\ref{T:scatter}, we denote $F(u)=(K\ast |u|^2)u$ and use \eqref{F-diff}. Estimating as in the proof of Theorem~\ref{T:scatter} (and employing the notation from that proof as well), we have
\begin{align*}
\text{LHS}\eqref{Born-ets} & \lesssim \|e^{it\Delta}\varphi\|_{X} \|F(u) -F(e^{it\Delta}\varphi)\|_{X'} \\
& \lesssim \|\varphi\|_{L^2}\bigl( \|\varphi\|_{L^2}^2+\|\varphi\|_{\dot H^{s_c}}^2\bigr) \|u-e^{it\Delta}\varphi\|_{X} \\
& \lesssim \|\varphi\|_{L^2}\bigl(\|\varphi\|_{L^2}^2+\|\varphi\|_{\dot H^{s_c}}^2\bigr) \|F(u)\|_{X'} \\
& \lesssim  \|\varphi\|_{L^2}^2\bigl(\|\varphi\|_{L^2}^2+\|\varphi\|_{\dot H^{s_c}}^2\bigr)^2,
\end{align*}
which yields the result. \end{proof}

%%%%%%%%%%%%%%%
%%%%%%%%%%%%%%%
%%%%%%%%%%%%%%%
%%%%%%%%%%%%%%%

\section{Born approximation on Gaussian inputs}\label{S:Gaussian}

To prove the main result (Theorem~\ref{T}), we will apply the Born approximation (Proposition~\ref{P:Born}) with rescaled Gaussian data $\varphi(x) = \eps \psi_\sigma(x)$ for some $\eps,\sigma>0$.  In this section, we compute explicitly the leading order term in the Born approximation with Gaussian data. 

We first recall the standard Gaussian integral
\begin{equation}\label{Gaussian-integral}
\int_{\R^d} e^{-\alpha|x|^2}\,dx = (\tfrac{\pi}{\alpha})^{\frac{d}{2}},\quad \Re \alpha>0.
\end{equation} 
We will also use the following recapitulation of \eqref{Gamma}:
\begin{equation}\label{Gamma-id}
|y|^{-2z} = \tfrac{1}{\Gamma(z)}\int_0^\infty r^z e^{-r|y|^2}\tfrac{dr}{r},\quad \Re z>0.
\end{equation}

The main result of this section is the following identity. 

\begin{proposition}[Born approximation on Gaussian inputs]\label{P1} Let $K:\R^d\backslash\{0\}\to\R$ be admissible with parameter $c\in[2,d)$.  For any $\sigma>0$, 
\begin{equation}\label{EP1}
\iiint K(z-w) |e^{it\Delta}\psi_\sigma(z)|^2 |e^{it\Delta} \psi_\sigma(w)|^2\,dz\,dw\,dt = (\pi\sigma^2)^{\frac{d+1}{2}}\langle |\nabla|^{-1} K,\psi_\sigma\rangle,
\end{equation}
where $\psi_\sigma$ is defined by \eqref{psi} and \eqref{psisigma}.
\end{proposition}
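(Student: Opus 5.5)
The plan is a direct computation: reduce both sides of \eqref{EP1} to the same one-dimensional integral of Gaussians. There are three steps.

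\textbf{Step 1: the left-hand side.} By \eqref{gaussian-solution} and \eqref{rescaled}, with $\tau=t/\sigma^2$,
\[
|e^{it\Delta}\psi_\sigma(z)|^2=(1+\tau^2)^{-\frac d2}\exp\bigl\{-\tfrac{|z|^2}{2\sigma^2(1+\tau^2)}\bigr\}=:g_t(z).
\]
Since $g_t$ is even, the spatial integral equals $\int K(y)\,[g_t\ast g_t](y)\,dy$. A Gaussian convolution using \eqref{Gaussian-integral} gives
\[
[g_t\ast g_t](y)=(1+\tau^2)^{-d}\bigl(\pi\sigma^2(1+\tau^2)\bigr)^{\frac d2}\exp\bigl\{-\tfrac{|y|^2}{4\sigma^2(1+\tau^2)}\bigr\}.
\]
Changing variables $dt=\sigma^2d\tau$, the left-hand side of \eqref{EP1} becomes
\[
\pi^{\frac d2}\sigma^{d+2}\int K(y)\int_\R(1+\tau^2)^{-\frac d2}\exp\bigl\{-\tfrac{|y|^2}{4\sigma^2(1+\tau^2)}\bigr\}\,d\tau\,dy .
\]

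\textbf{Step 2: the right-hand side.} Write $\langle|\nabla|^{-1}K,\psi_\sigma\rangle=\langle K,|\nabla|^{-1}\psi_\sigma\rangle$; this is justified by the representation $|\nabla|^{-1}=C\,L\ast$ from the proof of Lemma~\ref{L:inverse-K} together with Tonelli/Fubini. To compute $|\nabla|^{-1}\psi_\sigma$, apply \eqref{Gamma-id} on the Fourier side with $z=\tfrac12$:
\[
|\xi|^{-1}=\Gamma(\tfrac12)^{-1}\int_0^\infty r^{-\frac12}e^{-r|\xi|^2}\,dr,
\qquad\text{so}\qquad
|\nabla|^{-1}\psi_\sigma=\pi^{-\frac12}\int_0^\infty r^{-\frac12}e^{r\Delta}\psi_\sigma\,dr .
\]
The heat evolution of the Gaussian is explicit:
\[
e^{r\Delta}\psi_\sigma(y)=(1+r/\sigma^2)^{-\frac d2}\exp\bigl\{-\tfrac{|y|^2}{4\sigma^2(1+r/\sigma^2)}\bigr\}.
\]
Substituting $r=\sigma^2\tau^2$, so that $r^{-1/2}dr=2\sigma\,d\tau$, and using evenness in $\tau$, gives
\[
|\nabla|^{-1}\psi_\sigma(y)=\tfrac{\sigma}{\sqrt\pi}\int_\R(1+\tau^2)^{-\frac d2}\exp\bigl\{-\tfrac{|y|^2}{4\sigma^2(1+\tau^2)}\bigr\}\,d\tau .
\]
Comparing with Step 1, the left-hand side of \eqref{EP1} equals $\pi^{\frac d2}\sigma^{d+2}\cdot\tfrac{\sqrt\pi}{\sigma}\langle K,|\nabla|^{-1}\psi_\sigma\rangle=(\pi\sigma^2)^{\frac{d+1}2}\langle|\nabla|^{-1}K,\psi_\sigma\rangle$, which is the claim.

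\textbf{Step 3: justifying the interchanges (the main obstacle).} The delicate point is the use of Fubini for the integrals in $z,w,t$ (and in $r$), since $K$ may be singular and is only assumed admissible. I would run each computation with $|K|$ in place of $K$; all other integrands are positive Gaussians, so Tonelli applies. This reduces everything to the finiteness of $\int|K(y)|\,|\nabla|^{-1}\psi_\sigma(y)\,dy$.

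From the formula in Step 2, $|\nabla|^{-1}\psi_\sigma$ is bounded. It also decays like $|y|^{1-d}$: split the $\tau$-integral at $1+\tau^2\sim|y|^2$, or simply note that $|\nabla|^{-1}\psi_\sigma=C\,L\ast\psi_\sigma$ with $L(y)=|y|^{1-d}$ as in Lemma~\ref{L:inverse-K}. Combined with \eqref{Kbds}, the integrand is controlled by
\[
|y|^{-2}+|y|^{-c}\ \text{near the origin, integrable since } c<d,
\qquad\text{and}\qquad
|y|^{-2+1-d}\ \text{at infinity, integrable}.
\]
The same bound justifies the identity $\langle|\nabla|^{-1}K,\psi_\sigma\rangle=\langle K,|\nabla|^{-1}\psi_\sigma\rangle$ via $\iint|L(x-y)||K(y)|\psi_\sigma(x)\,dx\,dy<\infty$.
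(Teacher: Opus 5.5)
Your proof is correct. Its overall structure matches the paper's: reduce the spatial integrals to $\int K(y)\cdot(\text{Gaussian in }y)\,dy$, then identify the remaining time integral with $|\nabla|^{-1}$ of a Gaussian. Your self-convolution $g_t\ast g_t$ is the same computation as the paper's change of variables $(x,y)=(z-w,z+w)$ followed by \eqref{Gaussian-integral}.

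You differ from the paper in how you make that identification, i.e.\ in the content of Lemma~\ref{L1}. The paper works in physical space. It substitutes $t=r^{-1/2}$, rewrites $(1+r)^{-d/2}$ as a Gaussian integral, uses \eqref{Gamma-id} with $z=\frac{d-1}{2}$ to produce the Riesz kernel $|y|^{1-d}$, and then matches constants via \eqref{purepower}. You instead use the subordination formula $|\xi|^{-1}=\pi^{-1/2}\int_0^\infty r^{-1/2}e^{-r|\xi|^2}\,dr$ together with the explicit heat flow of a Gaussian. After $r=\sigma^2\tau^2$, the key identity reduces to the scalar fact $\int_\R e^{-\tau^2|\xi|^2}\,d\tau=\sqrt\pi\,|\xi|^{-1}$.

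What your route buys: it is shorter, it avoids tracking the normalization in \eqref{purepower}, and it shows structurally why $|\nabla|^{-1}$ appears. At each time, $|e^{it\Delta}\psi_\sigma|^2\ast|e^{it\Delta}\psi_\sigma|^2$ is a multiple of the heat-evolved Gaussian $e^{(t/\sigma)^2\Delta}\psi_\sigma$.

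What the paper's route buys: it lands directly on the convolution kernel $L(y)=|y|^{1-d}$, which is how $|\nabla|^{-1}K$ is defined in Lemma~\ref{L:inverse-K}. You have to invoke that representation separately to move $|\nabla|^{-1}$ from $K$ onto $\psi_\sigma$, which you do.

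Your Step 3 supplies details the paper leaves implicit: Tonelli applied with $|K|$, the bound $|\nabla|^{-1}\psi_\sigma(y)\lesssim\langle y\rangle^{1-d}$, and the justification of $\langle|\nabla|^{-1}K,\psi_\sigma\rangle=\langle K,|\nabla|^{-1}\psi_\sigma\rangle$.
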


The proof of Proposition~\ref{P1} will rely on the following explicit integral:
 
\begin{lemma}\label{L1} For any $x\in\R^d$, 
\begin{equation}\label{one-Gaussian-ID}
\int_\R (1+t^2)^{-\frac{d}{2}} \exp\bigl\{ -\tfrac{|x|^2}{4(1+t^2)}\bigr\} \,dt  = \pi^{\frac12}[ |\nabla|^{-1} \psi](x).
\end{equation}
\end{lemma}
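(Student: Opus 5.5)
The plan is to verify \eqref{one-Gaussian-ID} on the Fourier side, where both sides become explicit multiples of $e^{-|\xi|^2}$. First I rewrite the integrand as a rescaled Gaussian. Set $s=s(t)=(1+t^2)^{1/2}$. Then
\[
(1+t^2)^{-\frac d2}\exp\bigl\{-\tfrac{|x|^2}{4(1+t^2)}\bigr\}=s^{-d}\psi_{s}(x),
\]
with $\psi$ and $\psi_s$ as in \eqref{psi} and \eqref{psisigma}. By \eqref{Gaussian-integral} (completing the square, or analytic continuation in the linear term) and our normalization of $\F$,
\[
\widehat{\psi}(\xi)=2^{\frac d2}e^{-|\xi|^2}, \qquad \F\bigl[s^{-d}\psi_s\bigr](\xi)=2^{\frac d2}e^{-s^2|\xi|^2}=2^{\frac d2}e^{-|\xi|^2}e^{-t^2|\xi|^2}.
\]

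Next I integrate in $t$ under the Fourier transform. For $\xi\neq0$,
\[
\int_\R e^{-t^2|\xi|^2}\,dt=\pi^{\frac12}|\xi|^{-1},
\]
so the Fourier transform of the left-hand side of \eqref{one-Gaussian-ID} should be
\[
\pi^{\frac12}\,|\xi|^{-1}\,2^{\frac d2}e^{-|\xi|^2}=\pi^{\frac12}\,|\xi|^{-1}\widehat\psi(\xi).
\]
This is exactly the Fourier transform of $\pi^{\frac12}|\nabla|^{-1}\psi$, by the definition of $|\nabla|^{-1}$ as a Fourier multiplier.

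The remaining work, which I expect to be the only real obstacle, is justifying the interchange of the $t$-integral with the Fourier transform and upgrading the resulting identity to a pointwise one. I would argue directly with the inverse Fourier integral. The function
\[
G(t,\xi)=2^{\frac d2}e^{-(1+t^2)|\xi|^2}
\]
is nonnegative, and its integral over $\R_t\times\R^d_\xi$ equals $C\int|\xi|^{-1}e^{-|\xi|^2}\,d\xi<\infty$ since $d\geq 2$. So Fubini applies to
\[
(2\pi)^{-\frac d2}\iint e^{ix\cdot\xi}G(t,\xi)\,d\xi\,dt .
\]
Performing the $\xi$-integral first recovers $s^{-d}\psi_s(x)$ for each $t$, because the Gaussian is inverted pointwise. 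Performing the $t$-integral first gives the inverse transform of the $L^1$ function $\pi^{\frac12}|\xi|^{-1}\widehat\psi$. That inverse transform is a continuous function which, by definition, is $\pi^{\frac12}|\nabla|^{-1}\psi(x)$.

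This yields \eqref{one-Gaussian-ID} for every $x\in\R^d$, including $x=0$. There, the left side is finite because $(1+t^2)^{-d/2}$ is integrable for $d\geq3$.
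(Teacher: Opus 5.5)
Your proof is correct, but it takes a different route from the paper's. The paper works in physical space. It substitutes $t=r^{-1/2}$, writes $(1+r)^{-d/2}$ as a Gaussian integral in an auxiliary variable $y$, and completes the square. It then applies \eqref{Gamma-id} with $z=\tfrac{d-1}{2}$ to collapse the $r$-integral into $|y|^{-(d-1)}$. This exhibits the left-hand side as the explicit convolution $\tfrac12\pi^{-\frac d2}\Gamma(\tfrac{d-1}{2})\,|y|^{-(d-1)}\ast\psi$, which is identified with $\pi^{\frac12}|\nabla|^{-1}\psi$ only at the end, via \eqref{purepower}.

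You instead pass to the frequency side. There the $t$-integral becomes the one-dimensional Gaussian integral $\int_\R e^{-t^2|\xi|^2}\,dt=\pi^{\frac12}|\xi|^{-1}$. That identity is exactly \eqref{Gamma-id} with $z=\tfrac12$, after the substitution $r=t^2$. So the two proofs are dual uses of the same subordination formula, one in $x$ and one in $\xi$.

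Your version is shorter, and its rigor is more self-contained. Tonelli applied to the nonnegative $G$, together with $|\xi|^{-1}\widehat\psi\in L^1$ for $d\geq 2$, gives the identity pointwise for every $x$. It also matches the Fourier-multiplier definition of $|\nabla|^{-1}$ directly, with no appeal to the distributional relation $[\F^{-1}m\F]f=(2\pi)^{-\frac d2}[\F^{-1}m]\ast f$ for $m=|\xi|^{-1}$.

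What the paper's computation buys is that it lands directly on the convolution-kernel form of $|\nabla|^{-1}\psi$. That is the form used downstream: in Lemma~\ref{L:inverse-K} and in the proof of Proposition~\ref{P1}, $|\nabla|^{-1}$ is realized through the kernel $|y|^{1-d}$ and moved onto the rough kernel $K$. With your proof, one would simply invoke \eqref{purepower} at that later stage instead.
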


\begin{proof} We first change variables $t=r^{-\frac12}$ to obtain
\begin{align*}
\text{LHS}\eqref{one-Gaussian-ID} & = 2\int_0^\infty (1+t^2)^{-\frac{d}{2}}\exp\{-\tfrac{|x|^2}{4(1+t^2)}\}\,dt \\
& = \int_0^\infty (1+r)^{-\frac{d}{2}}r^{\frac{d-1}{2}}\exp\{-\tfrac{r|x|^2}{4(1+r)}\}\,\tfrac{dr}{r}. 
\end{align*}

We now use \eqref{Gaussian-integral} to write
\[
(1+r)^{-\frac{d}{2}} = \pi^{-\frac{d}{2}} \int_{\R^d} \exp\{-(1+r)|y-\tfrac{x}{2(1+r)}|^2\}\,dy. 
\]

Inserting this expression into the identity above, simplifying the exponent, and applying \eqref{Gamma-id} with $z=\tfrac{d-1}{2}$, we obtain
\begin{align*}
\text{LHS}\eqref{one-Gaussian-ID} & = \pi^{-\frac{d}{2}}  \int_{\R^d}\biggl[\int_0^\infty r^{\frac{d-1}{2}}\exp\{-r|y|^2\}\tfrac{dr}{r}\biggr]\exp\{-|y-\tfrac{x}{2}|^2\} \,dy\\
& = \pi^{-\frac{d}{2}}\Gamma(\tfrac{d-1}{2}) \int_{\R^d} |y|^{-(d-1)}\exp\{-|y-\tfrac{x}{2}|^2\}\,dy  \\
& = \tfrac12\pi^{-\frac{d}{2}}\Gamma(\tfrac{d-1}{2})\int_{\R^d} |y|^{-(d-1)}\exp\{-\tfrac{|x-y|^2}{4}\}\,dy.
\end{align*}
We now use \eqref{purepower} with $\alpha=1$ to see that
\[
\tfrac12\pi^{-\frac{d}{2}}\Gamma(\tfrac{d-1}{2})|y|^{-(d-1)} = \pi^{\frac12}\cdot(2\pi)^{-\frac{d}{2}} \F^{-1}(|\xi|^{-1}),
\]
which yields \eqref{one-Gaussian-ID}. \end{proof}

\begin{proof}[Proof of Proposition~\ref{P1}] We begin by using \eqref{gaussian-solution}, \eqref{rescaled}, and a change of variables to obtain
\begin{align*}
\text{LHS}\eqref{EP1} = \sigma^{2d+2}\iiint \tfrac{K(\sigma(z-w))}{(1+t^2)^d} \exp\{-\tfrac{|z|^2+|w|^2}{2(1+t^2)}\}\,dz\,dw\,dt.
\end{align*}
Next, we use the change of variables
\[
(x,y)=(z-w,z+w)
\]
followed by \eqref{Gaussian-integral} and Lemma~\ref{L1}.  This yields
\begin{align*}
\text{LHS}\eqref{EP1} & =2^{-d}\sigma^{2d+2} \iiint \tfrac{K(\sigma x)}{(1+t^2)^d} \exp\{-\tfrac{|x|^2+|y|^2}{4(1+t^2)}\}\,dx\,dy \,dt \\
& = \pi^{\frac{d}{2}}\sigma^{2d+2} \int K(\sigma x)\int (1+t^2)^{-\frac{d}{2}}\exp\{-\tfrac{|x|^2}{4(1+t^2)}\}\,dt\,dx \\
&  = \pi^{\frac{d+1}{2}}\sigma^{2d+2}\int K(\sigma x)[|\nabla|^{-1} \psi](x)\,dx \\
& = \pi^{\frac{d+1}{2}}\sigma^{d+1}\langle |\nabla|^{-1} K,\psi_\sigma\rangle,
\end{align*}
which yields \eqref{EP1}. \end{proof}

%%%%%%%%%%%%%%%
%%%%%%%%%%%%%%%
%%%%%%%%%%%%%%%
%%%%%%%%%%%%%%%
\section{Proof of the main result}\label{S:proof}

In this section we complete the proof of the main result, Theorem~\ref{T}. 

\begin{proof}[Proof of Theorem~\ref{T}] Let $K_1$ and $K_2$ be admissible kernels in the sense of Definition~\ref{D:admissible} with parameters $c_1,c_2\in[2,d)$, respectively. As admissibility with one parameter implies admissibility with any larger parameter, we may assume without loss of generality that $c_1=c_2=:c$. Writing $s_c=\tfrac{c}{2}-1$ and recalling Theorem~\ref{T:scatter}, let us denote the corresponding scattering maps by $\S_1,\S_2:\mathcal{B}\subset H^{s_c}(\R^d)\to H^{s_c}(\R^d)$, where $\mathcal{B}$ is a sufficiently small ball. We assume $\S_1|_{\mathcal{B}}\equiv \S_2|_{\mathcal{B}}$ and seek to prove that $K_1= K_2$ almost everywhere. 

To keep the formulas within the margins, we write
\[
W:=K_2-K_1.
\]
We fix $0<\sigma\leq 1$ and let $\eps>0$ to be determined below.  Recalling \eqref{psi} and \eqref{psisigma}, we have that
\[
\|\eps\psi_\sigma\|_{H^{s_c}} \lesssim \eps \sigma^{1+\frac{d-c}{2}},
\]
so that $\eps\psi_\sigma\in\mathcal{B}$ for all $\eps=\eps(\sigma)$ sufficiently small. 

Applying the Born approximation (Proposition~\ref{P:Born}), we obtain
\begin{equation}\label{proof1}
\begin{aligned}
0 & = \langle(\S_2-\S_1)(\eps\psi_\sigma),\eps\psi_\sigma \rangle \\
& = \eps^4\iiint W(z-w)|e^{it\Delta}\psi_\sigma(z)|^2 |e^{it\Delta}\psi_\sigma(w)|^2\,dz\,dw\,dt + \mathcal{O}(\eps^6 \sigma^{3d+4-2c}). 
\end{aligned}
\end{equation} 
On the other hand, by Proposition~\ref{P1},
\begin{equation}\label{proof2}
\iiint W(z-w)|e^{it\Delta}\psi_\sigma(z)|^2 |e^{it\Delta}\psi_\sigma(w)|^2\,dz\,dw\,dt =  (\pi\sigma^2)^{\frac{d+1}{2}}\langle |\nabla|^{-1} W,\psi_\sigma\rangle. 
\end{equation}
Combining \eqref{proof1} and \eqref{proof2}, we conclude that
\[
\langle |\nabla|^{-1}W,\psi_\sigma\rangle = \mathcal{O}(\eps^2\sigma^{2d+3-2c}).
\]
Sending $\eps\to 0$ for each fixed $\sigma>0$ then yields
\begin{equation}\label{proof3}
\langle |\nabla|^{-1}W,\psi_\sigma\rangle = 0 \qtq{for all}\sigma>0.
\end{equation}
Writing $|\nabla|^{-1} W$ as a function of the radial variable $r>0$, using spherical coordinates, and changing variables via $\rho=r^2$, it follows from \eqref{proof3} that
\begin{equation}\label{proof4}
\int_0^\infty \rho^{\frac{d-2}{2}}[|\nabla|^{-1} W](\sqrt{\rho}) e^{-s\rho}\,d\rho = 0 \qtq{for all}s>0. 
\end{equation}
We now observe that the left-hand side of \eqref{proof4} is the Laplace transform of
\[
V(\rho):=\rho^{\frac{d-2}{2}}[|\nabla|^{-1} W](\sqrt{\rho}).
\]
Lemma~\ref{L:inverse-K} guarantees that $V$ is continuous on $(0,\infty)$ and satisfies 
\[
|V(\rho)| \lesssim \rho^{\frac{d-3}{2}}+\rho^{\frac{d-1-c}{2}}.
\]
Thus, by the injectivity of the Laplace transform, \eqref{proof4} implies $|\nabla|^{-1} W\equiv 0$, i.e.
\[
|\nabla|^{-1} K_1 \equiv |\nabla|^{-1} K_2 .
\]
The bounds of Lemma~\ref{L:inverse-K} allow us to extend both sides (identically!) as harmonic functions in the upper half-space via the Poisson integral.  One may then recover $K_1$ and $K_2$ as the normal derivatives at the boundary and so deduce that $K_1=K_2$ almost everywhere.\end{proof}

%%%%%%%%%%%%%%%
%%%%%%%%%%%%%%%
%%%%%%%%%%%%%%%
%%%%%%%%%%%%%%%

\end{document}